\newtheorem{theorem}{Theorem}[section]
\newtheorem{lemma}[theorem]{Lemma}
\begin{document}
\title[Base size on subsets]{A formula for the base size of the symmetric group in its action on subsets}

\author[G. Mecenero]{Giovanni Mecenero}
\address{Giovanni Mecenero, Dipartimento di Matematica \lq\lq Tullio Levi-Civita\rq\rq,\newline
 University of Padova, Via Trieste 53, 35121 Padova, Italy} 
\email{giovanni.mecenero@studenti.unipd.it}

\author[P. Spiga]{Pablo Spiga}
\address{Pablo Spiga, Dipartimento di Matematica Pura e Applicata,\newline
 University of Milano-Bicocca, Via Cozzi 55, 20126 Milano, Italy} 
\email{pablo.spiga@unimib.it}
\subjclass[2010]{20B15, 20B30; secondary 05A18}
\keywords{determining number, base size, symmetric group, Kneser graph}  
\maketitle
\begin{abstract}
Given two positive integers $n$ and $k$, we obtain a formula for the base size of the symmetric group of degree $n$ in its action on $k$-subsets. Then, we use this formula to compute explicitly the base size for each $n$ and  for each $k\le 14$.
\end{abstract}
\section{Introduction}\label{introduction}

Let $G$ be a permutation group on $\Omega$. For $\Lambda = \{\omega_1, \ldots, \omega_k \} \subseteq \Omega$, we write
 $G_{(\Lambda)}$  for the \textit{\textbf{pointwise
stabilizer}} of $\Lambda$ in $G$. If $G_{(\Lambda)} = \{1\}$, then we say that $\Lambda$ is a \textit{\textbf{base}}. The size of a smallest possible base is known as the \textit{\textbf{base size}} of $G$ and it is customary to denote it by $\mathrm{b}(G)$ or (more precisely) by $\mathrm{b}_{\Omega}(G)$. Bases of small cardinality are relevant in computational group theory, because they are used in many
algorithms for dealing with permutation groups.

Much of the research on base sizes is focused on primitive groups. There are various reasons for that. First, the classic result of Jordan~\cite{jordan} bounds from above the cardinality of a primitive group in terms of its base size.  Second, in the 90s  Cameron and Kantor~\cite{cameron} have conjectured that
there exists an absolute constant $b$ with $\textrm{b}(G) \le b$, for every almost simple primitive
group in a non-standard action; we refer to~\cite{cameron} for undefined terminology.  This conjecture was settled in the positive in~\cite{B5,B6,B7,B8} with $b=7$. In turn, this result has stimulated two lines of research: determining the base size of almost simple primitive groups in their standard actions; determining, for each $b\le 7$, the almost simple primitive groups in non-standard actions having base size exactly $b$.

The symmetric group $\mathrm{Sym}(n)$ has two types of standard actions: the action on uniform partitions and the action on subsets of $\{1,\ldots,n\}$. The base size of $\mathrm{Sym}(n)$ for the action  on uniform partitions has been determined in~\cite{mosp}.
In this paper, we are interested in the base size of the symmetric group $\mathrm{Sym}(n)$ in its natural action on the collection of $k$-subsets of $\{1, \ldots, n\}$. Hence this paper is a contribution to the determination of the base size of almost simple primitive groups in their standard actions.

For simplicity, we denote this number with $b(n,k)$. Since the action of $\mathrm{Sym}(n)$ on $k$-subsets is permutation equivalent to the action on $(n-k)$-subsets, we have $$b(n,k)=b(n,n-k)$$ and hence we assume $2k\le n$.

There is a natural graph theoretic interpretation of $b(n,k)$, which makes  $b(n,k)$ of interest to group and to graph theorists. A \textit{\textbf{determining set}} of a graph $\Gamma$ is a subset $S$ of its vertices for which the only automorphism of $\Gamma$ that fixes every vertex in $S$ is the identity. The minimum cardinality of a determining set for $\Gamma$ is said to be the \textit{\textbf{determining number}} of $\Gamma$. Now, the \textit{\textbf{Kneser}} graph $K(n,k)$ is the graph having vertex set the collection of all $k$-subsets of $\{1,\ldots,n\}$ where two distinct $k$-subsets are declared to be adjacent if they are disjoint. Since the automorphism group of $K(n,k)$ is the symmetric group $\mathrm{Sym}(n)$ in its action on $k$-subsets (see~\cite[Theorem~7.8.2]{Godsil}), we deduce that the determining number of $K(n,k)$ equals $b(n,k)$. Therefore, our results can be interpreted in terms of the determining number of Knerser graphs.

There are many partial results on $b(n,k)$, see~\cite{boutin,CGGM,DD,halasi}. For instance, Halasi~\cite[Theorem~3.2]{halasi} has proved that
\begin{equation}\label{equation}
b(n,k)=\left\lceil\frac{2(n-1)}{k+1}\right\rceil,
\end{equation}
when $n\ge \lfloor k(k+1)/2\rfloor+1$. Strictly speaking, this formula for $b(n,k)$ is proved in~\cite{halasi} when $n\ge k^2$ and has been improved in~\cite{CGGM} to $n\ge \lfloor k(k+1)/2\rfloor+1 $. This result has been improved further in~\cite{boutin,CGGM,halasi}, but currently there is no explicit formula for $b(n,k)$, valid for every value of $n$ and $k$. Our current methods do not seem able to determine $b(n,k)$ when $k$ is asymptotically larger than $\sqrt{n}$.

Using the principle of inclusion-exclusion, we prove an implicit formula for $b(n,k)$ in terms of integer partitions of $n$. It is not clear at the moment if this formula can be used to extract substantial new information on $b(n,k)$. However, besides  theoretic interest, we have implemented this formula in a computer and we are reporting in Table~\ref{table1} the values of $b(n,k)$, for every $k\le 14$.  We hope that these values can be of some help to shed some light on $b(n,k)$, when $k$ is large. 

Observe that in Table~\ref{table1}, for a given $k\le 14$, we are reporting only the values of $b(n,k)$ when $n\le \lfloor k(k+1)/2\rfloor$, because when $n\ge \lfloor k(k+1)/2\rfloor+1$ we may simply use~\eqref{equation} to compute $b(n,k)$. In particular, when $k=1$, we have $\lfloor k(k+1)/2\rfloor=1\le 2k$ and hence  $b(n,k)=\lceil 2(n-1)/2\rceil=n-1$, for every $n\ge 2$. When $k=2$, we have $\lfloor k(k+1)/2\rfloor=3\le 2k$ and hence  $b(n,k)=\lceil2(n-1)/3\rceil$, for every $n\ge 4$.  For this reason, in Table~\ref{table1}, we are only including the values of $k\ge 3$.

\begin{theorem}\label{thrm:main}
Let $n$ and $k$ be positive integers with $2k\le n$. Then $b(n,k)$ is the smallest positive integer $\ell$ such that
\begin{align}\label{eq:5}
\sum_{\substack{\pi\hbox{ partition of }n\\
\pi=(1^{c_1},2^{c_2},\ldots,n^{c_n})}}(-1)^{n-\sum_{i=1}^nc_i}\frac{n!}{\prod_{i=1}^ni^{c_i}c_i!}\left(\sum_{\substack{\eta\hbox{ partition of }k\\
\eta=(1^{b_1},2^{b_2},\ldots,k^{b_k})}}\prod_{j=1}^k{c_j\choose b_j}\right)^\ell&\ne 0.
\end{align}
\end{theorem}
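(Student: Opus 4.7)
The plan is to reinterpret the left-hand side of \eqref{eq:5} as $\sum_{\sigma\in\mathrm{Sym}(n)}\mathrm{sgn}(\sigma)\,f(\sigma)^\ell$, where $f(\sigma)$ is the number of $k$-subsets of $\{1,\ldots,n\}$ fixed setwise by $\sigma$, and then to show that this sum counts the ordered $\ell$-tuples of $k$-subsets whose joint setwise stabiliser is trivial. The conclusion then follows at once from the definition of $b(n,k)$.

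For the reinterpretation, fix $\sigma$ of cycle type $\pi=(1^{c_1},\ldots,n^{c_n})$ and examine the three factors attached to $\pi$ in \eqref{eq:5}. The quantity $n!/\prod_i i^{c_i}c_i!$ is the size of the $\mathrm{Sym}(n)$-conjugacy class of $\sigma$; the sign $(-1)^{n-\sum_i c_i}$ coincides with $\mathrm{sgn}(\sigma)$, because a cycle of length $i$ has sign $(-1)^{i-1}$; and a $k$-subset $\Lambda$ is fixed setwise by $\sigma$ if and only if it is a union of cycles of $\sigma$, so specifying $\Lambda$ amounts to choosing, for each $j$, exactly $b_j$ of the $c_j$ cycles of length $j$ under the constraint $\sum_j jb_j=k$. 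Summing over all partitions $\eta=(1^{b_1},\ldots,k^{b_k})$ of $k$ gives ${\sum_\eta\prod_j{c_j\choose b_j}}=f(\sigma)$, so \eqref{eq:5} becomes $\sum_\sigma\mathrm{sgn}(\sigma)\,f(\sigma)^\ell$.

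Since $f(\sigma)^\ell$ counts ordered $\ell$-tuples $\Lambda=(\Lambda_1,\ldots,\Lambda_\ell)$ of $k$-subsets each fixed by $\sigma$, swapping the order of summation gives
\[
\sum_{\sigma\in\mathrm{Sym}(n)}\mathrm{sgn}(\sigma)\,f(\sigma)^\ell=\sum_{\Lambda}\sum_{\sigma\in\mathrm{Sym}(n)_{(\Lambda)}}\mathrm{sgn}(\sigma),
\]
where $\mathrm{Sym}(n)_{(\Lambda)}$ is the pointwise stabiliser of $\Lambda$ in the action on $k$-subsets, i.e.\ the joint setwise stabiliser of $\Lambda_1,\ldots,\Lambda_\ell$. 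The crucial step is to identify $\mathrm{Sym}(n)_{(\Lambda)}$ with the Young subgroup $\prod_B\mathrm{Sym}(B)$, where $B$ runs over the (nonempty) blocks of the common refinement of the $\ell$ partitions $\{\Lambda_i,\{1,\ldots,n\}\setminus\Lambda_i\}$: a permutation fixes every $\Lambda_i$ setwise iff it preserves the membership pattern of each point in the $\Lambda_i$, iff it permutes each block $B$ within itself. Such a Young subgroup is contained in $\mathrm{Alt}(n)$ if and only if every block has size at most $1$, and this happens exactly when $\mathrm{Sym}(n)_{(\Lambda)}=\{1\}$, i.e.\ when $\Lambda$ is a base. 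By the standard fact that the signs of the elements of a subgroup not contained in $\mathrm{Alt}(n)$ sum to zero, the inner sum evaluates to $1$ when $\Lambda$ is a base and to $0$ otherwise.

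The left-hand side of \eqref{eq:5} is thus a non-negative integer equal to the number of bases of size $\ell$, and is nonzero precisely when $\ell\ge b(n,k)$; the smallest such $\ell$ is therefore $b(n,k)$. The only non-routine ingredient is the Young subgroup identification of the third step, which, together with the observation that the sign character plays the role of the Möbius function in the relevant inclusion–exclusion, is elementary; accordingly I do not anticipate a serious obstacle.
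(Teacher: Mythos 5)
Your proof is correct, and it takes a genuinely different route from the paper. The paper first reduces to transpositions (any non-identity permutation fixing a family of subsets forces a transposition fixing them, Lemma~\ref{lem:2.1}), then runs inclusion--exclusion over sets of transpositions viewed as graphs on $\{1,\ldots,n\}$, and the heart of its argument is the inductive computation (Lemma~\ref{lemma:3}) that the signed count of connected graphs on $n$ labelled vertices is $(-1)^{n-1}(n-1)!$, which produces the coefficient $(-1)^{n-\sum_i c_i}\,n!/\prod_i i^{c_i}c_i!$. You instead read that coefficient off directly as (conjugacy class size)$\times$(sign) and recognise the whole left-hand side of~\eqref{eq:5} as $\sum_{\sigma\in\mathrm{Sym}(n)}\mathrm{sgn}(\sigma)f(\sigma)^\ell$; after swapping the order of summation, the joint setwise stabiliser of an $\ell$-tuple is the Young subgroup on the atoms of the membership-pattern partition, and the sign sum over a subgroup vanishes unless the subgroup lies in $\mathrm{Alt}(n)$, which for a Young subgroup means it is trivial. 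Both arguments end at the same identity, namely that the sum in~\eqref{eq:5} equals the number $h_\ell$ of $\ell$-tuples whose stabiliser is trivial, and both then invoke the observation~\eqref{key}. What your route buys is brevity and transparency: the graph-theoretic inclusion--exclusion and the signed connected-graph count are replaced by the elementary character-sum fact, and the identity becomes an instance of weighting Burnside-type counting by the sign character (your remark that the sign plays the role of the M\"obius function is exactly the point). What the paper's route buys is a self-contained combinatorial proof that incidentally establishes the signed enumeration of connected labelled graphs, and its Lemma~\ref{lemma:1} is the graph-side analogue of your Young-subgroup identification. Your approach is also closer in spirit to the independent proof of del Valle and Roney-Dougal cited in the introduction, which the authors describe as remarkably different from theirs. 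Two small points worth making explicit if you write this up: the inner sum equals $1$ rather than $|H|$ precisely because a nontrivial Young subgroup always contains a transposition, so ``contained in $\mathrm{Alt}(n)$'' and ``trivial'' coincide here; and ``nonzero precisely when $\ell\ge b(n,k)$'' uses that a base tuple of length $b(n,k)$ can be padded by repeating coordinates to give one of any larger length.
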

A result similar  to Theorem~\ref{thrm:main} was very recently determined independently by Coen del Valle and  Colva Roney-Dougal~\cite{arxiv}, their proof is remarkably different from ours.
\begin{longtable}{c|c|c|c|c|c|c|c|c|c|c|c|c}\hline
$n\backslash k$&3&4&5&6&7&8&9&10&11&12&13&14\\\hline
6 &3&&&&&&&&&&\\
7 &-&&&&&&&&&&\\
8 &-&3&&&&&&&&&\\
9 &-&4&&&&&&&&&\\
10&-&4&4&&&&&&&&\\
11&-&-&4&&&&&&&&\\
12&-&-&4&4&&&&&&&\\
13&-&-&5&4&&&&&&&\\
14&-&-&5&5&4&&&&&&\\
15&-&-&5&5&4&&&&&&\\
16&-&-&-&5&5&4&&&&&\\
17&-&-&-&5&5&5&&&&&&\\
18&-&-&-&6&5&5&5&&&&&\\
19&-&-&-&6&5&5&5&&&&&\\
20&-&-&-&6&6&5&5&5&&&&\\
21&-&-&-&6&6&5&5&5&&&&\\
22&-&-&-&-&6&6&5&5&5&&&\\
23&-&-&-&-&6&6&6&5&5&&&\\
24&-&-&-&-&6&6&6&5&5&5&&\\
25&-&-&-&-&7&6&6&6&5&5&&\\
26&-&-&-&-&7&6&6&6&5&5&5&\\
27&-&-&-&-&7&7&6&6&6&5&5&\\
28&-&-&-&-&7&7&6&6&6&6&5&5\\
29&-&-&-&-&-&7&7&6&6&6&6&5\\
30&-&-&-&-&-&7&7&6&6&6&6&6\\
31&-&-&-&-&-&7&7&7&6&6&6&6\\
32&-&-&-&-&-&8&7&7&6&6&6&6\\
33&-&-&-&-&-&8&7&7&7&6&6&6\\
34&-&-&-&-&-&8&8&7&7&6&6&6\\
35&-&-&-&-&-&8&8&7&7&7&6&6\\
36&-&-&-&-&-&8&8&7&7&7&6&6\\
37&-&-&-&-&-&-&8&8&7&7&7&6\\
38&-&-&-&-&-&-&8&8&7&7&7&6\\
39&-&-&-&-&-&-&8&8&8&7&7&7\\
40&-&-&-&-&-&-&9&8&8&7&7&7\\
41&-&-&-&-&-&-&9&8&8&8&7&7\\
42&-&-&-&-&-&-&9&8&8&8&7&7\\
43&-&-&-&-&-&-&9&9&8&8&7&7\\
44&-&-&-&-&-&-&9&9&8&8&8&7\\
45&-&-&-&-&-&-&9&9&8&8&8&7\\
46&-&-&-&-&-&-&-&9&9&8&8&8\\
47&-&-&-&-&-&-&-&9&9&8&8&8\\
48&-&-&-&-&-&-&-&9&9&9&8&8\\
49&-&-&-&-&-&-&-&9&9&9&8&8\\
50&-&-&-&-&-&-&-&10&9&9&8&8\\
51&-&-&-&-&-&-&-&10&9&9&9&8\\
52&-&-&-&-&-&-&-&10&9&9&9&8\\
53&-&-&-&-&-&-&-&10&10&9&9&8\\
54&-&-&-&-&-&-&-&10&10&9&9&9\\
55&-&-&-&-&-&-&-&10&10&9&9&9\\
56&-&-&-&-&-&-&-& -&10&10&9&9\\
57&-&-&-&-&-&-&-& -&10&10&9&9\\
58&-&-&-&-&-&-&-& -&10&10&9&9\\
59&-&-&-&-&-&-&-& -&10&10&10&9\\
60&-&-&-&-&-&-&-& -&11&10&10&9\\
61&-&-&-&-&-&-&-& -&11&10&10&9\\
62&-&-&-&-&-&-&-& -&11&10&10&10\\
63&-&-&-&-&-&-&-& -&11&11&10&10\\
64&-&-&-&-&-&-&-& -&11&11&10&10\\
65&-&-&-&-&-&-&-& -&11&11&10&10\\
66&-&-&-&-&-&-&-& -&11&11&10&10\\
67&-&-&-&-&-&-&-& -& -&11&11&10\\
68&-&-&-&-&-&-&-& -& -&11&11&10\\
69&-&-&-&-&-&-&-& -& -&11&11&10\\
70&-&-&-&-&-&-&-& -& -&11&11&11\\
71&-&-&-&-&-&-&-& -& -&12&11&11\\
72&-&-&-&-&-&-&-& -& -&12&11&11\\
73&-&-&-&-&-&-&-& -& -&12&11&11\\
74&-&-&-&-&-&-&-& -& -&12&11&11\\
75&-&-&-&-&-&-&-& -& -&12&12&11\\
76&-&-&-&-&-&-&-& -& -&12&12&11\\
77&-&-&-&-&-&-&-& -& -&12&12&11\\
78&-&-&-&-&-&-&-& -& -&12&12&11\\
79&-&-&-&-&-&-&-& -& -&-&12&12\\
80&-&-&-&-&-&-&-& -& -&-&12&12\\
81&-&-&-&-&-&-&-& -& -&-&12&12\\
82&-&-&-&-&-&-&-& -& -&-&12&12\\
83&-&-&-&-&-&-&-& -& -&-&12&12\\
84&-&-&-&-&-&-&-& -& -&-&13&12\\
85&-&-&-&-&-&-&-& -& -&-&13&12\\
86&-&-&-&-&-&-&-& -& -&-&13&12\\
87&-&-&-&-&-&-&-& -& -&-&13&12\\
88&-&-&-&-&-&-&-& -& -&-&13&13\\
89&-&-&-&-&-&-&-& -& -&-&13&13\\
90&-&-&-&-&-&-&-& -& -&-&13&13\\
91&-&-&-&-&-&-&-& -& -&-&13&13\\
92&-&-&-&-&-&-&-& -& -&-&-&13\\
93&-&-&-&-&-&-&-& -& -&-&-&13\\
94&-&-&-&-&-&-&-& -& -&-&-&13\\
95&-&-&-&-&-&-&-& -& -&-&-&13\\
96&-&-&-&-&-&-&-& -& -&-&-&13\\
97&-&-&-&-&-&-&-& -& -&-&-&14\\
98&-&-&-&-&-&-&-& -& -&-&-&14\\
99&-&-&-&-&-&-&-& -& -&-&-&14\\
100&-&-&-&-&-&-&-& -& -&-&-&14\\
101&-&-&-&-&-&-&-& -& -&-&-&14\\
102&-&-&-&-&-&-&-& -& -&-&-&14\\
103&-&-&-&-&-&-&-& -& -&-&-&14\\
104&-&-&-&-&-&-&-& -& -&-&-&14\\
105&-&-&-&-&-&-&-& -& -&-&-&14\\
\caption{Some values for $b(n,k)$}\label{table1}
\end{longtable}

\section{Proof of Theorem~\ref{thrm:main}}

Let $n, k$ and $\ell$ be positive integers with $1\le k\le n/2$. We let 
$$F={\{1,\ldots,n\}\choose k}$$
be the collection of all $k$-subsets of $\{1,\ldots,n\}$ and we let $F^\ell$ be the collection of all $\ell$-tuples of $k$-subsets of $\{1,\ldots,n\}$. In particular, $$|F^\ell|={n\choose k}^\ell.$$

For each $g\in \mathrm{Sym}(n)$, we let 
$$F_g=\{\alpha\in F\mid \alpha^g=\alpha\}$$
be the collection of all $k$-subsets of $\{1,\ldots,n\}$  fixed setwise by $g$. 
Therefore, the cartesian product $F_g^\ell$ is the collection of all $\ell$-tuples of $k$-subsets of $\{1,\ldots,n\}$  fixed setwise by $g$. 
For instance, for each $1\le i<j\le n$, $F_{(i\,j)}$ is the collection of all $k$-subsets of $\{1,\ldots,n\}$ fixed by the transposition swapping $i$ and $j$.
 Moreover we let $$H_\ell=\{(\alpha_1,\ldots,\alpha_\ell)\in F^\ell\mid \textrm{if }\alpha_i^g=\alpha_i \,\forall i, \textrm{ then }g=1\}$$ denote the collection of all $\ell$-tuples of $k$-subsets of $\{1,\ldots,n\}$ which are only fixed by the identity, and $h_\ell = |H_\ell|$.\\
Since each element of $F^\ell$ is either fixed by some non-identity element of $\mathrm{Sym}(n)$ or is only fixed by the identity, exclusively, we have
\begin{equation}\label{verajail}F^\ell \setminus H_\ell =\bigcup_{g\in\mathrm{Sym}(n)\setminus\{1\}}F_g^\ell.
\end{equation}
Observe that
\begin{equation}\label{key}
    h_\ell = 0 \hbox{ if and only if }\ell<b(n,k).
\end{equation}
In fact, by definition of $b(n,k)$, there exists an $\ell$-tuple of $k$-subsets which is only fixed by the identity if and only if $\ell\geq b(n,k)$.

\begin{lemma}\label{lem:2.1}
    Given a non-identity permutation $g\in \mathrm{Sym}(n)$, there exists a transposition $\tau$ such that, for every $S\subseteq\{1,\ldots,n\}$ fixed by $g$, $S$ is also fixed by $\tau$.
\end{lemma}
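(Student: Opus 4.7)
The plan is to exploit the well-known description of $g$-invariant subsets of $\{1,\ldots,n\}$ in terms of the cycle decomposition of $g$. Recall that $S\subseteq\{1,\ldots,n\}$ satisfies $S^{g}=S$ if and only if $S$ is a (possibly empty) union of cycles of $g$.

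Since $g\ne 1$, the permutation $g$ must have at least one cycle of length $\ge 2$; call it $C$, and pick any two distinct points $a,b\in C$. I would then propose $\tau=(a\,b)$. To verify that $\tau$ has the required property, let $S\subseteq\{1,\ldots,n\}$ with $S^{g}=S$. By the observation above, $S$ is a union of cycles of $g$, so either $C\subseteq S$ or $C\cap S=\emptyset$. In the first case both $a$ and $b$ lie in $S$, and in the second case neither does; in either case $S^{\tau}=S$.

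There really is no serious obstacle here: the statement is a direct consequence of the cycle decomposition of a permutation, and the only content is the choice of $a$ and $b$ inside a non-trivial cycle of $g$. The lemma will later be useful because it reduces the union in \eqref{verajail} from one indexed by all non-identity elements of $\mathrm{Sym}(n)$ to one indexed by transpositions, which is a much more manageable object for the inclusion-exclusion argument leading to \eqref{eq:5}.
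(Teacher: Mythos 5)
Your proof is correct and follows essentially the same route as the paper: both pick a cycle of $g$ of length at least $2$, take a transposition of two points in it, and use the fact that a $g$-invariant set must contain that cycle entirely or miss it entirely. The only cosmetic difference is that you invoke the general ``invariant sets are unions of cycles'' fact, while the paper verifies this directly for the single chosen cycle.
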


\begin{proof}
    Let $(a_1, a_2,\ldots,a_i)$ be one of the cycles of $g$ in its decomposition in disjoint cycles. As $g$ is not the identity, we may suppose that $i\ge 2$. Let $\tau$ be the transposition $(a_1, a_2)$.
    
Let $S$ be a subset of $\{1,\ldots,n\}$ with $S^g=S$.    Then either $S\cap \{a_1,\ldots,a_i\}=\emptyset$ or $S\cap \{a_1,\ldots,a_i\}=\{a_1,\ldots,a_i\}$. In either case,  $a_1$ and $a_2$ are either both in $S$ or both not in $S$. Therefore $\tau$ fixes $S$.
\end{proof}

From~\eqref{verajail} and Lemma~\ref{lem:2.1}, we obtain
\begin{align}\label{eq:1}
F^\ell\setminus H_\ell&=\bigcup_{1\le i<j\le n}F_{(i\,j)}^\ell.
\end{align}

Now, we let ${\{1,\ldots,n\}\choose 2}$ denote the set of all $2$-subsets of $\{1,\ldots,n\}$. From~\eqref{eq:1} and the definition of $h_\ell$, using inclusion-exclusion, we obtain
\begin{align}\label{eq:2}
{n\choose k}^\ell-h_\ell=|F^\ell\setminus H_\ell|=\sum_{\emptyset\ne\Gamma\subseteq{\{1,\ldots,n\}\choose 2}}(-1)^{|\Gamma|-1}\left|\bigcap_{\{i,j\}\in \Gamma}F_{(i\,j)}^\ell\right|.
\end{align}

Now, given a subset $\emptyset\ne \Gamma\subseteq{\{1,\ldots,n\}\choose 2}$, we write $$F_\Gamma^\ell=\bigcap_{\{i,j\}\in \Gamma}F_{(i\,j)}^\ell$$
and, with a slight abuse of terminology, we let $F_\emptyset^\ell:=F^\ell$. With this notation, from~\eqref{eq:2}, we get
\begin{align}\label{eq:33vera}
h_\ell&=\sum_{\Gamma\subseteq{\{1,\ldots,n\}\choose 2}}(-1)^{|\Gamma|}|F_\Gamma^\ell|.
\end{align}

In what follows, we identify  $\Gamma\subseteq {\{1,\ldots,n\}\choose 2}$ with a graph on $\{1,\ldots,n\}$ having edge set $\Gamma$. In particular, we borrow some notation from graph theory.

Given $\Gamma\subseteq {\{1,\ldots,n\}\choose 2}$, we let $\pi(\Gamma)$ be the partition of $n$ where the parts are the cardinalities of the connected components of $\Gamma$. In other words, let $X_{1},X_2,\ldots,X_t$ be the connected components of $\Gamma$ ordered with $|X_1|\ge |X_2|\ge \cdots \ge |X_t|$. Then $$\pi(\Gamma):=(|X_1|,|X_2|,\ldots,|X_t|).$$

We now make two important remarks.
\begin{lemma}\label{lemma:1}
Let $\Gamma$ be a graph having vertex set $\{1,\ldots,n\}$ and having connected components $X_1,\ldots,X_t.$ Then $$\langle (i\,j)\mid \{i,j\}\in \Gamma\rangle=\mathrm{Sym}(X_1)\times\mathrm{Sym}(X_2)\times\cdots\times\mathrm{Sym}(X_t).$$
\end{lemma}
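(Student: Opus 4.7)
The plan is to prove the two inclusions separately. For the inclusion $\langle (i\,j)\mid \{i,j\}\in\Gamma\rangle\subseteq\mathrm{Sym}(X_1)\times\cdots\times\mathrm{Sym}(X_t)$, I would simply observe that each generator $(i\,j)$ with $\{i,j\}\in\Gamma$ swaps two vertices lying in the same connected component $X_s$, and therefore belongs to $\mathrm{Sym}(X_s)$, which sits as a direct factor of the claimed product. Since the right-hand side is a subgroup of $\mathrm{Sym}(n)$ containing all generators, it contains the subgroup they generate.

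For the reverse inclusion, it suffices to show that $\mathrm{Sym}(X_s)\subseteq\langle (i\,j)\mid\{i,j\}\in\Gamma\rangle$ for every component $X_s$, because the factors of the direct product commute with each other and the product is generated by the union of its factors. Since $\mathrm{Sym}(X_s)$ is generated by all transpositions on $X_s$, it is enough to prove that $(a\,b)\in\langle (i\,j)\mid\{i,j\}\in\Gamma\rangle$ for every $a,b\in X_s$ with $a\ne b$. Because $X_s$ is a connected component of $\Gamma$, there is a path $a=v_0,v_1,\ldots,v_m=b$ in $\Gamma$, so each consecutive transposition $(v_{r-1}\,v_r)$ is a generator.

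I would then conclude by induction on the path length $m$. The base case $m=1$ is immediate since $(a\,b)$ is itself a generator. For the inductive step, the standard conjugation identity
\[
(a\,b) \;=\; (v_{m-1}\,b)\,(a\,v_{m-1})\,(v_{m-1}\,b)
\]
expresses $(a\,b)$ as a product of $(v_{m-1}\,b)$, which is a generator, and $(a\,v_{m-1})$, which lies in $\langle (i\,j)\mid\{i,j\}\in\Gamma\rangle$ by the inductive hypothesis applied to the shorter path $v_0,\ldots,v_{m-1}$. Hence $(a\,b)$ lies in the generated subgroup, as required.

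There is essentially no main obstacle here: the statement is a classical fact about transposition-generated subgroups of symmetric groups, equivalent to the standard observation that the transpositions along the edges of any spanning tree of $X_s$ already generate $\mathrm{Sym}(X_s)$. The only mild point to be careful about is keeping the two inclusions separate and explicitly invoking the commutativity of the direct factors when assembling $\mathrm{Sym}(X_1)\times\cdots\times\mathrm{Sym}(X_t)$ from the individual $\mathrm{Sym}(X_s)$.
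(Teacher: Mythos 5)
Your proof is correct, but it follows a slightly different route than the paper's. The paper first settles the connected case by induction on the number of vertices: it picks a spanning tree, removes a leaf $n$, applies the inductive hypothesis to get $\mathrm{Sym}(n-1)$, and then uses the sandwich $\mathrm{Sym}(n)\ge\langle (i\,j)\mid\{i,j\}\in\Gamma\rangle\ge\langle\mathrm{Sym}(n-1),(a\,b)\rangle=\mathrm{Sym}(n)$; the disconnected case is then handled by observing that the generated group splits as the direct product of the groups generated within each component. You instead argue by double inclusion and, for the hard direction, manufacture an arbitrary transposition $(a\,b)$ inside a component directly, by induction on the length of a (simple) path from $a$ to $b$ via the conjugation identity $(a\,b)=(v_{m-1}\,b)(a\,v_{m-1})(v_{m-1}\,b)$. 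Both arguments are elementary and classical; yours avoids spanning trees and the ``$\mathrm{Sym}(n-1)$ plus one transposition generates $\mathrm{Sym}(n)$'' step, at the cost of an explicit conjugation computation, while the paper's leaf-removal induction keeps the generating-set manipulation implicit. Your handling of the easy inclusion and of the assembly of the direct product from the factors $\mathrm{Sym}(X_s)$ (disjoint supports, generated by the union of the factors) is also fine and mirrors what the paper does for $t\ge 2$.
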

In other words, the group generated by the transpositions corresponding to elements in $\Gamma$ generate a group which is a direct product of symmetric groups.
\begin{proof}
For each $i\in \{1,\ldots,t\}$, let $\Gamma_i$ be the restriction of $\Gamma$ to $X_i$.
Suppose first  $t=1$, that is, $\Gamma$ is connected. In this case, we prove the lemma by indution on $n$. 
When $n = 1$, the lemma holds true because the group generated by the empty set is the identity symmetric group $\mathrm{Sym}(1)$.
Suppose  $n\geq 2$. Let $\{a,b\}$ be a leaf of a spanning tree for $\Gamma$. Without loss of generality we may suppose $n\in \{a,b\}$ and that $n$ is a leaf of the spanning tree. The restriction $\tilde\Gamma$ of $\Gamma$ to $\{1,\ldots,n-1\}$ is connected and hence, by our inductive hypothesis, we have 
    $$\langle (i\,j)\mid \{i,j\}\in \tilde\Gamma\rangle = \mathrm{Sym}(n-1).$$
Now,
    $$\mathrm{Sym}(n)\ge\langle (i\,j)\mid \{i,j\}\in \Gamma\rangle \geq \langle\mathrm{Sym}(n-1),(a,b)\rangle=\mathrm{Sym}(n).$$
    
Assume now $t\ge 2$.    As $X_i\cap X_j=\emptyset$ for $i\neq j$ and as every edge in $\Gamma$ is in $\Gamma_i$ for some $i$, we deduce
    \begin{align*}
        \langle (i\,j)\mid \{i,j\}\in \Gamma\rangle=\langle (i\,j)\mid \{i,j\}\in \Gamma_1\rangle\times\cdots\times\langle (i\,j)\mid \{i,j\}\in \Gamma_t\rangle.
    \end{align*}
    Now, the lemma follows by applying the case of connected graphs.
\end{proof}

 From Lemma~\ref{lemma:1}, it immediately follows that, for every $\Gamma_1,\Gamma_2\subseteq{\{1,\ldots,n\}\choose 2}$ with $\pi(\Gamma_1)=\pi(\Gamma_2)$, we have $|F_{\Gamma_1}^\ell|=|F_{\Gamma_2}^\ell|$. Since the cardinality of these sets depends only on an integer partition, for each partition $\pi$ of $n$, we let $f_\pi^\ell$ be the cardinality of $|F_\Gamma^\ell|$, where $\Gamma$ is an arbitrary graph with $\pi=\pi(\Gamma)$.

Fix $\pi$ a partition of $n$. We write $\pi$ in ``exponential'' notation, that is, $\pi=(1^{c_1},2^{c_2},\ldots,n^{c_n})$ where $c_i$ denotes the number of parts in $\pi$ equal to $i$. 
\begin{lemma}\label{lemma:3}
Given an integer partition $\pi=(1^{c_1},\cdots,n^{c_n})$ of $n$, we have
\begin{align}\label{eq:4}\sum_{\substack{\Gamma\subseteq{\{1,\ldots,n\}\choose 2}\\ \pi(\Gamma)=\pi}}(-1)^{|\Gamma|}=(-1)^{n-\sum_{i=1}^nc_i}\frac{n}{\prod_{i=1}^ni^{c_i}c_i!}.
\end{align}
\end{lemma}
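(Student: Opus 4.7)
The plan is to partition the summation by the underlying set-partition into connected components, and then reduce to the classical signed count of connected graphs. First, I would note that specifying $\Gamma\subseteq\binom{\{1,\ldots,n\}}{2}$ with $\pi(\Gamma)=\pi$ is equivalent to choosing an (unordered) set partition of $\{1,\ldots,n\}$ with $c_i$ blocks of size $i$ and, on each block of size $i$, a connected graph. Since $(-1)^{|\Gamma|}$ factors multiplicatively over the connected components, and since the number of set partitions of $\{1,\ldots,n\}$ of type $\pi$ is $\dfrac{n!}{\prod_i (i!)^{c_i}c_i!}$, the left-hand side of \eqref{eq:4} equals
\[
\frac{n!}{\prod_{i=1}^n (i!)^{c_i}c_i!}\cdot\prod_{i=1}^n B_i^{c_i},\quad\text{where}\quad B_i:=\sum_{\substack{\Delta\subseteq\binom{\{1,\ldots,i\}}{2}\\ \Delta\text{ connected}}}(-1)^{|\Delta|}.
\]

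The crux is the identity $B_i=(-1)^{i-1}(i-1)!$, for which the slickest route is the exponential formula. Set $A_m:=\sum_{\Gamma\subseteq\binom{\{1,\ldots,m\}}{2}}(-1)^{|\Gamma|}$. On one hand, the binomial theorem gives $A_m=(1-1)^{\binom{m}{2}}$, which equals $1$ for $m\in\{0,1\}$ and $0$ otherwise, so $\sum_{m\ge 0}A_m x^m/m!=1+x$. On the other hand, decomposing a graph by its connected components yields
\[
\sum_{m\ge 0}A_m\frac{x^m}{m!}=\exp\!\left(\sum_{m\ge 1}B_m\frac{x^m}{m!}\right).
\]
Comparing, $\sum_{m\ge 1}B_m x^m/m!=\log(1+x)=\sum_{m\ge 1}\frac{(-1)^{m-1}}{m}x^m$, so $B_m=(-1)^{m-1}(m-1)!$. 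A reader preferring to avoid generating functions can obtain the same identity by induction from the recurrence $A_m=\sum_{k=1}^m\binom{m-1}{k-1}B_kA_{m-k}$, derived by isolating the connected component containing vertex $1$.

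Substituting $B_i=(-1)^{i-1}(i-1)!$ and using $i!/(i-1)!=i$ collapses $(i!)^{c_i}$ against $((i-1)!)^{c_i}$ to $i^{c_i}$, so the sum becomes
\[
(-1)^{\sum_{i=1}^n(i-1)c_i}\cdot\frac{n!}{\prod_{i=1}^n i^{c_i}c_i!}.
\]
Finally, $\sum_{i=1}^n(i-1)c_i=\sum_i ic_i-\sum_i c_i=n-\sum_i c_i$, which matches the sign in \eqref{eq:4} (with numerator $n!$, as in the expression appearing in Theorem~\ref{thrm:main}). The only genuinely non-trivial input is the connected-graph identity $B_i=(-1)^{i-1}(i-1)!$; everything else is routine bookkeeping with multinomials.
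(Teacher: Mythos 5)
Your proof is correct, and it is worth noting where it coincides with and departs from the paper's argument. The structural half is the same: both you and the paper observe that choosing $\Gamma$ with $\pi(\Gamma)=\pi$ amounts to choosing a set partition of $\{1,\ldots,n\}$ of type $\pi$ (there are $n!/\prod_i (i!)^{c_i}c_i!$ of these) together with a connected graph on each block, and that $(-1)^{|\Gamma|}$ factors over the blocks, so everything reduces to the signed count $B_i$ of connected graphs on $i$ labelled vertices; the final bookkeeping collapsing $(i!)^{c_i}$ against $((i-1)!)^{c_i}$ is identical. Where you differ is in the proof of the key identity $B_i=(-1)^{i-1}(i-1)!$: the paper proves it by induction on $n$, comparing the numbers $p_n,d_n$ (connected graphs with an even/odd number of edges) with $P_n,D_n$ (all graphs), via a count of rooted disconnected graphs and the binomial-theorem fact that $P_n=D_n$ for $n\ge 2$; you instead invoke the exponential formula, noting that $\sum_m A_m x^m/m!=1+x$ since $A_m=(1-1)^{\binom{m}{2}}$, whence the connected EGF is $\log(1+x)$ and $B_m=(-1)^{m-1}(m-1)!$ follows by reading off coefficients (your fallback recurrence $A_m=\sum_k\binom{m-1}{k-1}B_kA_{m-k}$ is also a valid elementary substitute). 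Your route is shorter and leans on a standard tool, while the paper's is self-contained and purely elementary; both ultimately exploit the same vanishing of the total signed sum for $m\ge 2$. You are also right to read the right-hand side of \eqref{eq:4} with numerator $n!$ rather than the printed $n$, which is a typo, as the paper's own proof and the statement of Theorem~\ref{thrm:main} confirm.
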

\begin{proof}
    First, we show \eqref{eq:4} in the special case $\pi = (1^0,2^0,\dots,n^1)$, that is, the trivial partition consisting of one part of size $n$. In particular we show
    $$\sum_{\substack{\Gamma\subseteq\binom{\{1,\dots,n\}}{2}\\ \pi(\Gamma)=(n) }}{(-1)^{|\Gamma|}} = (-1)^{n-1}(n-1)!.$$
This equality has a combinatorial interpretation: Among all connected graphs on $n$ labelled vertices, the difference between the number of those with an even number of edges and those with an odd number of edges is $(-1)^{n-1}(n-1)!$.
    
   We  show this by induction on $n$. Assume $n=1$: the only graph on 1 vertex has an even number of edges, and $(-1)^00!=1$. Assume now $n\ge 2$ and assume  the result to be true for integer partitions of $n-1$.
    
    Let $p_n$ and $d_n$ be the number of connected graphs on $n$ labelled vertices with an even number of edges and with an odd number of edges, respectively. 
    
    Let $P_n$ and $D_n$ be the number of graphs on $n$ labelled vertices with an even number of edges and with an odd number of edges, respectively. In fact
\begin{align*}
P_n &= \binom{\binom{n}{2}}{0}+\binom{\binom{n}{2}}{2}+\cdots,\\
D_n& = \binom{\binom{n}{2}}{1}+\binom{\binom{n}{2}}{3}+\cdots.
\end{align*}
    
    We can count the number of disconnected graphs with an even (odd) number of edges on $n$ labelled vertices in two ways: on one hand it is $P_n-p_n$ (respectively $D_n-d_n$), on the other hand we can count the number of rooted disconnected graphs with an even (odd) number of edges on $n$ labelled vertices (rooted means with an highlighted vertex, the root) and then divide this number by $n$, as a graph can be rooted in $n$ different ways.

    To count the number of rooted disconnected graphs, we first choose the connected component containing the root: for every possible cardinality $i=1,\ldots,n-1$ we can choose the connected component in $n\choose i$ ways, inside of which we have $i$ ways to choose the root.
    
   If we want the graph to have an even number of edges then either both the connected component of the root and the rest have an even number of edges, or they have both an odd number of edges.
    Similarly,   if we want the graph to have an odd number of edges then either the connected component of the root has an even number of edges and the rest has an odd number of edges, or viceversa. Therefore, we have
    \begin{align}\label{eq:7}
        P_n-p_n = \frac{1}{n}\sum_{i = 1}^{n-1}{i\binom{n}{i}(p_iP_{n-i} + d_iD_{n-i})},
    \end{align}
    \begin{align}\label{eq:8}
        D_n-d_n = \frac{1}{n}\sum_{i = 1}^{n-1}{i\binom{n}{i}(p_iD_{n-i} + d_iP_{n-i})}.
    \end{align}
    Taking \eqref{eq:8}-\eqref{eq:7} we get 
    \begin{align}\label{eq:9}
        p_n-d_n+D_n-P_n = \frac{1}{n}\sum_{i = 1}^{n-1}{i\binom{n}{i}(D_{n-i}-P_{n-i})(p_i-d_i)}.
    \end{align}
    
    From the Binomial theorem, we have
    $$\sum_{k=0}^{m}{(-1)^k\binom{m}{k}}=0\quad \forall m>0.$$
 This implies $D_n-P_n=0$ whenever $\binom{n}{2}>0$, that is, for every $n\geq 2$. For $n=1$, $D_1-P_1=-1$, because the only graph on 1 vertex has an even number of edges.
    So~\eqref{eq:9} becomes
    $$p_n-d_n = \frac{1}{n}(n-1)n(-1)(p_{n-1}-d_{n-1})=(-1)^{n-1}(n-1)!,$$
    where the last equality follows  by our inductive hypothesis.
   
    Now that we have concluded the special case, we need to deduce the general case. Given $\pi = (1^{c_1},\dots,n^{c_n})$, the number of ways that the set $\{1,\dots,n\}$ can be partitioned into $\pi$ is 
    $$\frac{n!}{\prod_{i = 1}^{n}{c_i!(i!)^{c_i}}}.$$
    Now, given a specific partition $\mathcal{P}$ of $\{1,\dots,n\}$ realizing $\pi$, the sum among all graphs $\Gamma$ with connected components $\mathcal{P}$ is
    \begin{align*}
 \sum_{\substack{\Gamma\subseteq\binom{\{1,\dots,n\}}{2}\\\hbox{with conn. comp.s}\\ \mathcal{P}}}{(-1)^{|\Gamma|}}&=\prod_{\mathcal{S}\in \mathcal{P}}{\sum_{\substack{\gamma\subseteq\binom{\mathcal{S}}{2}\\ \pi(\gamma)=(|\mathcal{S}|) }}{(-1)^{|\gamma|}}} = \prod_{\mathcal{S}\in \mathcal{P}}{(-1)^{|\mathcal{S}|-1}(|\mathcal{S}|-1)!}\\
       & = \prod_{i = 1}^{n}{(-1)^{(i-1)c_i}(i-1)!^{c_i}} = (-1)^{n-\sum_{i = 1}^{n}{c_i}}\prod_{i = 1}^{n}{(i-1)!^{c_i}}.   
    \end{align*}
    
    In conclusion
    \begin{align*}
        \sum_{\substack{\Gamma\subseteq\binom{\{1,\dots,n\}}{2}\\\pi(\Gamma)=\pi}}{(-1)^{|\Gamma|}} &= \frac{n!}{\prod_{i = 1}^{n}{c_i!i!^{c_i}}} (-1)^{n-\sum_{i = 1}^{n}{c_i}}\prod_{i=1}^{n}{(i-1)!^{c_i}}=\\
        &= (-1)^{n-\sum_{i=1}^{n}{c_i}}\frac{n!}{\prod_{i=1}^{n}{i^{c_i}c_i!}}.\qedhere
    \end{align*}
\end{proof}

Using Lemma~\ref{lemma:3} in~\eqref{eq:33vera}, we  get
\begin{align}\label{eq:10}
h_\ell = \sum_{\substack{\pi\hbox{ partition of }n\\
\pi=(1^{c_1},2^{c_2},\ldots,n^{c_n})}}(-1)^{n-\sum_{i=1}^nc_i}\frac{n!}{\prod_{i=1}^ni^{c_i}c_i!}f_\pi^\ell.
\end{align}

\begin{lemma}\label{lemma:yoyo}
Given an integer partition $\pi=(1^{c_1},\ldots,n^{c_n})$, we have
\begin{align}\label{eq:6}
f_\pi^\ell&=\left(\sum_{\substack{\eta\hbox{ partition of }k\\
\eta=(1^{b_1},2^{b_2},\ldots,k^{b_k})}}\prod_{j=1}^k{c_j\choose b_j}\right)^\ell.
\end{align}
\end{lemma}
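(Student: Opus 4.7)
The plan is to reduce the computation of $f_\pi^\ell$ to the $\ell$-th power of the number of $k$-subsets of $\{1,\ldots,n\}$ fixed setwise by the subgroup generated by the transpositions indexed by $\Gamma$, and then to identify such subsets combinatorially via the connected components of $\Gamma$.

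First I would observe that, by definition,
$$F_\Gamma^\ell=\bigcap_{\{i,j\}\in \Gamma}F_{(i\,j)}^\ell=\left(\bigcap_{\{i,j\}\in \Gamma}F_{(i\,j)}\right)^\ell,$$
so that $f_\pi^\ell=|F_\Gamma|^\ell$, where $F_\Gamma$ is the set of $k$-subsets of $\{1,\ldots,n\}$ fixed by every transposition $(i\,j)$ with $\{i,j\}\in \Gamma$. Therefore the problem reduces to counting $|F_\Gamma|$ and it suffices to verify that this count equals the inner sum in~\eqref{eq:6}.

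Next, let $X_1,\ldots,X_t$ be the connected components of $\Gamma$. A $k$-subset $S$ of $\{1,\ldots,n\}$ lies in $F_\Gamma$ precisely when $S$ is fixed setwise by $\langle(i\,j)\mid\{i,j\}\in\Gamma\rangle$, and by Lemma~\ref{lemma:1} this group is $\mathrm{Sym}(X_1)\times\cdots\times\mathrm{Sym}(X_t)$. A standard check (essentially the argument already used in the proof of Lemma~\ref{lem:2.1}) shows that $S$ is fixed by $\mathrm{Sym}(X_i)$ if and only if $S\cap X_i\in\{\emptyset,X_i\}$. Hence the elements of $F_\Gamma$ are exactly the unions of connected components of $\Gamma$ whose total size equals $k$.

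To count these unions, I would parametrize: for each $j\ge 1$, let $b_j$ be the number of components of size $j$ that are included in $S$. Since the partition $\pi=(1^{c_1},\ldots,n^{c_n})$ records $c_j$ components of size $j$, the constraints are $0\le b_j\le c_j$ for all $j$, and $\sum_j j b_j=k$, with $b_j=0$ whenever $j>k$. Thus the vector $(b_1,b_2,\ldots,b_k)$ ranges over the partitions $\eta=(1^{b_1},2^{b_2},\ldots,k^{b_k})$ of $k$, and for a given $\eta$ the number of admissible choices of the components is $\prod_{j=1}^k\binom{c_j}{b_j}$ (with the convention that $\binom{c_j}{b_j}=0$ when $b_j>c_j$, automatically killing the invalid terms). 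Summing over $\eta$ gives $|F_\Gamma|$; raising to the $\ell$-th power yields~\eqref{eq:6}.

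The only step requiring any care is the characterization of the subsets fixed by $\mathrm{Sym}(X_1)\times\cdots\times\mathrm{Sym}(X_t)$, but this follows from the transitivity of each $\mathrm{Sym}(X_i)$ on $X_i$ and is essentially the content of Lemma~\ref{lem:2.1}; the rest is routine bookkeeping.
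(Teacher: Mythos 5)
Your proof is correct and follows essentially the same route as the paper: reduce $f_\pi^\ell$ to the $\ell$-th power of the number of fixed $k$-subsets, characterize those as unions of connected components of $\Gamma$, and count them by partitions $\eta$ of $k$ via $\prod_{j=1}^k\binom{c_j}{b_j}$. If anything, your write-up spells out the final binomial-coefficient bookkeeping more explicitly than the paper does.
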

\begin{proof}
 Given $\Gamma\subseteq{\{1,\ldots,n\} \choose 2}$ with $\pi(\Gamma)=\pi$ and connected components $X_1,\ldots,X_t$, we want to calculate how many elements of $F^\ell$ are fixed by $\langle (i\,j)\mid\{i,j\}\in\Gamma\rangle$. This is the number of $k$-subsets of $\{1,\ldots,n\}$ that are fixed by $\langle (i\,j)\mid\{i,j\}\in\Gamma\rangle$, raised to the $\ell$, since in order for an element of $F^\ell$ to be fixed, each of its coordinates has to be fixed.

    To calculate this number, we first notice that in order for a $k$-subset $S$ to be fixed, either $X_i\subseteq S$ or $X_i\cap S = \emptyset$. 
    Thus the number we are looking for is the number of ways we can create a set of order $k$ combining different $X_i$s.
\end{proof}

From~\eqref{eq:10} and~\eqref{eq:6}, we find the beautiful equality
\begin{align*}
h_\ell&=\sum_{\substack{\pi\hbox{ partition of }n\\
\pi=(1^{c_1},2^{c_2},\ldots,n^{c_n})}}(-1)^{n-\sum_{i=1}^nc_i}\frac{n!}{\prod_{i=1}^ni^{c_i}c_i!}\left(\sum_{\substack{\eta\hbox{ partition of }k\\
\eta=(1^{b_1},2^{b_2},\ldots,k^{b_k})}}\prod_{j=1}^k{c_j\choose b_j}\right)^\ell.
\end{align*}
Now, Theorem~\ref{thrm:main} follows from~\eqref{key}.
\thebibliography{10}

\bibitem{boutin} D.~Boutin, Identifying graph automorphisms using determining sets. \textit{Electronic J. Combin.} \textbf{13} (2006), \#R78.
\bibitem{CGGM}J.~C\'aceres, D.~Garijo, A.~Gonz\'alez, A.~M\'arquez, M.~L.~Puertas, The determining number of Kneser graphs, \textit{Discrete Math. Theor. Comput. Sci.} \textbf{15} (2013), 1--14. 

\bibitem{B5}T.~C.~Burness, On base sizes for actions of finite classical groups, \textit{J. Lond. Math. Soc. (2)} \textbf{75} (2007), 545--562. 

\bibitem{B6}T.~C.~Burness, M.~W.~Liebeck, A.~Shalev, Base sizes for simple groups and a conjecture of Cameron, \textit{Proc. Lond. Math. Soc. (3)} \textbf{98} (2009), 116--162. 

\bibitem{B7}T.~C.~Burness, R.~M.~Guralnick, J.~Saxl, On base sizes for symmetric groups, \textit{Bull. Lond. Math. Soc.} \textbf{43} (2011),  386--391.

\bibitem{B8}T.~C.~Burness, E.~A.~O'Brien, R.~A.~Wilson, Base sizes for sporadic simple groups, \textit{Israel J. Math.} \textbf{177} (2010), 307--333.

\bibitem{cameron}P.~J.~Cameron, W.~M.~Kantor, Random permutations: some group-theoretic aspects, \textit{Combin. Probab. Comput.} \textbf{2} (1993) 257--262. 

\bibitem{DD}A.~Das, H.~K.~Dey, Determining number of Kneser graphs: exact values and improved bounds, \textit{Discrete Math. Theor. Comput. Sci.} \textbf{24} (2022), no. 1, Paper No. 10, 9 pp.
\bibitem{arxiv}C.~del Valle, C.~M.~Roney-Dougal, The base size of the symmetric group acting on subsets, \href{https://arxiv.org/abs/2308.04360}{https://arxiv.org/abs/2308.04360}. 
\bibitem{Godsil}C.~Godsil, G.~Royle, \textit{Algebraic graph theory}, Graduate texts in mathematics \textbf{207}, Springer, New York, 2001.

\bibitem{halasi}Z.~Halasi, On the base size for the symmetric group acting on subsets, \textit{Studia Sci. Math. Hungar.} \textbf{49} (2012), 492--500.

\bibitem{jordan}C.~Jordan, \textit{Trait\'e des Substitutions et des \'Equations Alg\'ebriques}, Gauthier-Villars, Paris, 1870.

\bibitem{mosp}J.~Morris, P.~Spiga, On the base size of the symmetric and the alternating group acting on partitions, \textit{J. Algebra} \textbf{587} (2021), 569--593.
\end{document}